\newtheorem{definition}{Definition}
\newtheorem{theorem}{Theorem}[section]
\newtheorem{proposition}{Proposition}[section]
\begin{document}

\title{\parbox{\linewidth}{\footnotesize\noindent } A Characterization of the Gelfand pair : Quasi Gelfand pair}
\author{Cornelie MITCHA MALANDA\thanks{{\scriptsize Brazzaville, Congo. E-mail: }{\footnotesize %
Cornelie.mitcha@yahoo.fr}}}
\date{}
\maketitle

\begin{abstract}
    Let G be a locally compact group and let K be a compact subgroup of Aut(G), the group of automorphisms of G. The pair $(G, K )$ is a Gelfand pair if the algebra $L^{1}_{K}(G)$ of K-invariant integrable functions on G is commutative under convolution. In \cite{toure2008lie}, the charactezations of this algebra in the nilpotent case were studied, which generalize some results obtained by C. Benson, J. Jenkins, G. Ratcliff in \cite{benson1990gel} and obtained a new criterion for Gelfand pairs. In this paper we describe the spherical function associated with this type of pair.
\end{abstract}

\textbf{Mathematics Subject classification:} 43A80, 22E25, 22E60.

\textbf{Keywords: }quasi-Gelfand pair, quasi-spherical function. 

\section{Introduction}

Let  G be a locally compact group and K be a compact subgroup of automorphisms of G. We denote by $L_{K}^{1}(G)$ the algebra (under convolution) of K-invariant integrable function with complex values on G. We say (G,K) is a Gelfand pair when the algebra $L_{K}^{1}(G)$ is commutative. Equivalently, the algebra $L^{1}(G//K)$ of integrable K-biinvariant functions on the semidirect product $K \ltimes G$ of K is abelian. This notion of the Gelfand pair has been sufficiently studied by several authors (\cite{benson1990gel}, \cite{benson1999orbit}, \cite{faraut1980analyse}). However, with the Lie algebra structure induced by the convolution product, $L_{K}^{1}(G)$ is now a Lie algebra. In \cite{toure2008lie} some characterization of this algebra when it is nilpotent is given, which generalized some results obtained by C. Benson, J. Jenkins and G. Ratcliff \cite{benson1990gel}, and the new Gelfand pair criterion is obtained. In this work, we are interesred in this algebra when the nilpotence step is p ($p \in \mathbb{N}^{\ast}$). Hence, we say that (G, K) is a quasi-Gelfand pair when the algebra $L_{K}^{1}(G)$ is nilpotent of step p. When p is equal to 1 we recover the classical Gelfand pair with the zonal spherical function. Throughout this work, we describe the analogue of spherical function called the quasi-spherical function, which is associated with this type of pair and give an example of this pair..

\section{Quasi Gelfand pair}

Let $G$ be a locally compact group, $K$ a compact subgroup of $G$.
We designate by $L^{1}(G)^{\natural}$ the convolution algebra of $K$-biinvariant integrable functions. This algebra is endowed with the Lie algebra structure by puting $[f,g]=f\ast g-g\ast f$ for all functions $f$ and $g$ belonging to $L^{1}(G)^{\natural}$.
Every homomorphism $\pi$ of convolution algebra is an homomorphism of Lie algebra. So
\begin{eqnarray}
\pi (f\ast g)=\pi (f)\pi (g)\Longrightarrow \pi ([f,g])=[\pi (f),\pi (g)]
\end{eqnarray}%
for all functions $f,g\in L^{1}(G)^{\natural }$.

If K is a compact subgroup of Aut($G$), the group of automorphisms of $G$ and  $dk$ the normalized Haar measure on K, we put, for all function $f \in
L^{1}(G)$, 
$$f^{K}(x) =\int_{K}f(k.x)dk $$
for all $x \in G$. A function $f$ is
$K$-invariant if only if $f^{K} = f$.

$L_{K}^{1}\left( G\right) $ denotes the subalgebra of $L^{1}(G)$ of 
$K$-invariant functions, is isomorphic to $L^{1}\left( G\right) ^{\natural}$ (see \cite{faraut1980analyse} for details).

 \begin{definition}
     The couple $(G , K)$ is called quasi-Gelfand pair of order $p~(p \in \mathbb{N})$ if  $L^{1}(G)^{\natural}$ or $L_{K}^{1}\left( G\right) $   is nilpotent of step $p$.
 \end{definition}

We put:
\begin{eqnarray*}
C^{1}(L^{1}(G)^{\natural })
&=&[L^{1}(G)^{\natural },L^{1}(G)^{\natural
}]\\
&\vdots & \\
C^{p}(L^{1}(G)^{\natural })
&=& [L^{1}(G)^{\natural },C^{p-1}(L^{1}(G)^{\natural
})].
\end{eqnarray*}

Let consider $(G,K)$ a quasi Gelfand pair and $E$ a vector space of dimension $n$. For all $X\in L^{1}(G)^{\natural })$, one has: 
\begin{equation*}
adX[C^{p-1}(L^{1}(G)^{\natural })] = ad^{p}X((L^{1}(G)^{\natural })).
\end{equation*}

Thus any function $f\in C^{p}(L^{1}(G)^{\natural })$ can be written as $%
f=ad^{p}h(k)$, with $h,k\in L^{1}(G)^{\natural }$.

\begin{theorem}
If $(G , K)$ is a quasi-Gelfand pair then $G$ is unimodular.
\end{theorem}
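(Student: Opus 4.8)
The plan is to prove something a little stronger — that $L^{1}(G)^{\natural}$ is actually \emph{commutative}, i.e.\ that $(G,K)$ is a genuine Gelfand pair — and then to invoke the classical fact that Gelfand pairs are unimodular (cf.\ the references in the Introduction). So the whole content is to upgrade "Lie-nilpotent of step $p$'' to "abelian''.

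First I would move the problem into a von Neumann algebra. The left regular representation $f\mapsto L_{f}$, $L_{f}\xi=f\ast\xi$, embeds $L^{1}(G)^{\natural}\cong L^{1}_{K}(G)$ into $\mathcal{B}(L^{2}(G))$; this map is injective, and $L^{1}_{K}(G)$ is stable under the involution $f^{\ast}(x)=\Delta_{G}(x)^{-1}\overline{f(x^{-1})}$ because $\Delta_{G}$ is $K$-invariant (a compact group of automorphisms has trivial module, hence preserves Haar measure, hence preserves $\Delta_{G}$; concretely $\mu(E(k\cdot g))=\mu(E)\,\Delta_{G}(g)$ since $k$ preserves $\mu$). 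Let $M$ be the von Neumann algebra generated by $\{L_{f}:f\in L^{1}(G)^{\natural}\}$, that is, its weak-operator closure. The crucial step is that Lie-nilpotency transfers to $M$: by the Definition, $C^{p}(L^{1}(G)^{\natural})=0$ says exactly that the associative iterated commutator $\Phi(T_{1},\dots,T_{p+1})=[T_{1},[T_{2},[\dots,[T_{p},T_{p+1}]\dots]]]$ vanishes on $\{L_{f}\}^{p+1}$. Now $\Phi$ is a fixed noncommutative polynomial in its arguments, hence \emph{separately} weak-operator continuous (left and right multiplication by a fixed operator are weakly continuous). Approximating the first argument of $\Phi$ by a net from $\{L_{f}\}$, then the second, and so on through all $p+1$ slots, one gets $\Phi\equiv 0$ on $M^{p+1}$: $M$ is Lie-nilpotent of step $p$.

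Next I would argue that a Lie-nilpotent von Neumann algebra is abelian. If $M$ were not abelian, it would contain a self-adjoint element outside its centre, hence a spectral projection $e$ that is not central, hence a nonzero $v\in eM(1-e)$; polar decomposition inside $M$ then yields a nonzero partial isometry $u\in M$ with $u^{\ast}u\le 1-e$ and $uu^{\ast}\le e$, so $B:=\mathbb{C}u^{\ast}u\oplus\mathbb{C}uu^{\ast}\oplus\mathbb{C}u\oplus\mathbb{C}u^{\ast}$ is a $\ast$-subalgebra of $M$ isomorphic to $M_{2}(\mathbb{C})$. Since $[B,B]=\mathfrak{sl}_{2}(\mathbb{C})$ is perfect, $C^{n}(B)=\mathfrak{sl}_{2}(\mathbb{C})\neq 0$ for every $n$, and $C^{n}(B)\subseteq C^{n}(M)$ for a Lie subalgebra, contradicting Lie-nilpotency of $M$. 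Hence $M$ is abelian, so $L_{f}L_{g}=L_{g}L_{f}$, and injectivity of $f\mapsto L_{f}$ gives $f\ast g=g\ast f$ for all $f,g\in L^{1}(G)^{\natural}$. Thus $(G,K)$ is a Gelfand pair, and $G$ is unimodular by the classical result.

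The step I expect to be the main obstacle is the transfer of Lie-nilpotency to the weak closure: one only has \emph{separate} (not joint) weak continuity of multiplication, so the extension of $\Phi\equiv 0$ from $\{L_{f}\}$ to $M$ must be carried out one variable at a time, and I would want to check that no boundedness/Kaplansky input is actually needed for this. A secondary point requiring a careful (but standard) argument is the claim that a non-abelian von Neumann algebra contains a copy of $M_{2}(\mathbb{C})$, which I sketched via spectral projections and polar decomposition. One could instead try to imitate the classical proof that Gelfand pairs are unimodular with "commutative'' replaced everywhere by "Lie-nilpotent of step $p$'', but the von Neumann algebra route above looks cleaner and is self-contained modulo the classical Gelfand-pair statement.
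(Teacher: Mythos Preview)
Your proof is correct and takes a genuinely different route from the paper's. The paper works directly with the convolution identity: since $C^{p}(L^{1}(G)^{\natural})=0$, any nonzero $g\in C^{p-1}(L^{1}(G)^{\natural})$ lies in the centre of $L^{1}(G)^{\natural}$; writing out $f\ast g=g\ast f$ for arbitrary $f$, averaging over $K$, and specialising to $x=e$ gives $g(y^{-1})\bigl(1-\Delta(y^{-1})\bigr)=0$ a.e., from which the paper reads off $\Delta\equiv 1$. That argument stays entirely inside elementary integration on $G$. Your approach is operator-algebraic and proves strictly more: by pushing the Lie-nilpotent identity from $\{L_{f}\}$ to the von Neumann algebra $M$ it generates (separate WOT-continuity of the iterated commutator, one slot at a time, exactly as you outline; adding $I$ is harmless since $[I,\,\cdot\,]=0$) and then exhibiting a copy of $M_{2}(\mathbb{C})$ inside any non-abelian von Neumann algebra, you force $M$---and hence $L^{1}(G)^{\natural}$ itself, by faithfulness of $f\mapsto L_{f}$---to be \emph{commutative}. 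In other words, you show that every quasi-Gelfand pair in the paper's sense is already a Gelfand pair, and then invoke the classical unimodularity result. The two points you flag are the right ones and are handled correctly: the polynomial identity passes to the WOT closure by iterated separate continuity (no Kaplansky-type boundedness is needed, one only uses that the zero set of each one-variable slice is WOT-closed), and the non-central-projection/polar-decomposition construction of matrix units is the standard way to locate $M_{2}(\mathbb{C})$ in a non-commutative von Neumann algebra. What your route buys is a far stronger conclusion---at the price of collapsing the very notion the paper is built around, and in particular contradicting the paper's claimed example of a nontrivial quasi-Gelfand pair.
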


\begin{proof}
Suppose $L^{1}(G)^{\natural}$ is the Lie algebra of step $p$ then $C^{(p-1)}(L^{1}(G)^{\natural}) \neq \{0\}$ and $C^{(p)}(L^{1}(G)^{\natural}) = \{0\}$, where $C^{(p)}(L^{1}(G)^{\natural}) = [ L^{1}(G)^{\natural} , C^{(p-1)}(L^{1}(G)^{\natural}].$
Thus, for all functions $f \in L^{1}(G)^{\natural}$ and $g \in C^{(p-1)}(L^{1}(G)^{\natural}))$ not identically equal to zero, one has  
$f*g = g*f$ and for all $x \in G$,

\begin{eqnarray*}
\int_{G} f(y) g(y^{-1}x)dy
&=& \int_{G} \Delta(y^{-1}) f(y)g(xy^{-1})dy 
\end{eqnarray*}
hence, 
\begin{eqnarray*}
\int_{G} f(y) [g(y^{-1}x) - \Delta(y^{-1}) g(xy^{-1})] dy
&=& 0.
\end{eqnarray*}

Thus, for any function $f \in L^{1}(G)^{\natural}$  

\begin{equation*}
0 = \int_{G} f(y) \int_{K} [g(y^{-1}(kx)) - \Delta(y^{-1}) g((kx)y^{-1})] dk dy
\end{equation*}

which implies

\begin{eqnarray*}
\int_{K} [g(y^{-1}(kx)) - \Delta(y^{-1}) g((kx)y^{-1})] dk 
&=& 0
\end{eqnarray*}
for all $y \in G$ and $g \in C^{(p-1)}(L^{1}(G)^{\natural})$.\\
In particular, for $x = e$ we obtain $g(y^{-1}) - \Delta(y^{-1}) g(y^{-1}) = 0 ~~for~all ~x \in G$. We deduce that\\
 $\Delta(y) = 1$ for all $y \in G$ thus $G$ is unimodular.
\end{proof}

Consider now G a Lie group, we denote Then we designate by $\xi^{'}(G)$ the space of compactly supported distributions and equip the space  with the following bracket.
\begin{align}
    [D_{1}, D_{2}] = D_{1}*D_{2} - D_{2} \ast D_{1}
\end{align}
for all $D_{1}, D_{2} \in \xi^{\prime}(G)$. Then the convolution of distributions $D_{1}, D_{2} \in \xi^{\prime}(G)$ is defined by $< D_{1} \ast D_{2},f> = <D_1, \varphi>$ where $\varphi(x)=<D_2 , _{x^{-1}} f>$ for all $x \in G$ and ${ _{x^{-1}}} f(y) = f(xy)$ for all $y \in G$.
K is always a compact subgroup of Aut(G). Let $\xi_{K}^{\prime} (G)$ denote the algebra pf K-invariant compactly supported distributions that is the distribution $D \in \xi^{\prime} (G)$ such that $D^{K}= D$ where $D^{K}$ is defined by
$< D^{K} , f > = < D , f^{K} >$ for all $f \in \mathbf{C}_{c}^{\infty}(G)$. $\xi_{K}^{\prime}(G)$ is a Lie subalgebra of $\xi^{\prime}(G)$.

Let $\delta_{x}$ be the Dirac measure at $x \in G$ and for any function $f \in \mathbf{C}_{c}(G)$, the space of continuous complex functions on $ G$ compact support,

\begin{equation*}
<\delta_{x}^{K} , f> = <\delta_{x} , f^{K}> = f^{K}(x) = \int_{K} f(k.x)dk,
\end{equation*} 

$\delta_{x}^{K}$ is a $K$-invariant distribution with compact support (the support being $K.x$). Let $x_{1}.x_{2}.\cdots.x_{p}$ be elements of $G$, we have

\begin{equation*}
<\delta_{x_{1}}^{K}*\delta_{x_{1}}^{K}*\cdots*\delta_{x_{p}}^{K} , f> = \int_{K^{p}} f((k_{1}.x_{1})(k_{2}.x_{2})\cdots(k_{p}.x_{p}))dk_{1}dk_{2}\cdots dk_{p}
\end{equation*}

for any function $f \in \mathbf{C}_{c}(G)$. Let $\mathcal{S}_{p}$ denote the set of permutations of order $p$. For $i \in \{1,2,\cdots, p\}$, 

\begin{eqnarray*}
\mathcal{S}_{p}^{i} 
&=& \{\sigma \in \mathcal{S}_{p}: \sigma(i) = p; \sigma(1)<\sigma(2)<\cdots < \sigma(i)~~\rm{and}\\
& &
\sigma(i+1) > \sigma(i+2)>\cdots > \sigma(p) \}.
\end{eqnarray*}

Note that

\begin{eqnarray*}
\mathcal{S}_{p}^{1} 
&=& \{\sigma \}~~where ~~\sigma(1) = p, \sigma(2) = p-1, \cdots, \sigma(p)=1\\
\mathcal{S}_{p}^{p}
&=& \{id \}.
\end{eqnarray*}

Generally $\mathcal{S}_{p}^{i} = \mathcal{S}_{p}^{i,1} \cup \mathcal{S}_{p}^{i,1^{\sim}}$ where

\begin{equation*}
\mathcal{S}_{p}^{i,1} = \{ \sigma \in \mathcal{S}_{p}^{i}; \sigma(1)=1\} 
\end{equation*}

and 

\begin{equation*}
\mathcal{S}_{p}^{i,1^{\sim}}  = \{ \sigma \in \mathcal{S}_{p}^{i}; \sigma(p)=1\}.
\end{equation*}

The following result is a generalization of [proposition 1.4.2.6, \cite{benson1990gel}].

\begin{theorem}
    
If $(G, K)$ is a quasi- Gelfand pair of order $p$ then for all $x_{1}, x_{2}, \cdots, x_{p+1} \in G$ there exists a $m \in \{1, 2, \cdots, p\}$ and a permutation $\sigma \in \mathcal{S}_{p+1}^{m}$ such that $x_{1}x_{2}x_{3}\cdots x_{p+1} \in (K.x_{\sigma(1)})(K.x_{\sigma(2)})\cdots (K.x_{\sigma(p+1)}).$
\end{theorem}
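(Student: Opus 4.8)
The plan is to work throughout in the convolution algebra $\xi'_K(G)$ of $K$-invariant compactly supported distributions, applied to the measures $\delta^K_{x_1},\dots,\delta^K_{x_{p+1}}$. Set
\[
B:=\big[\delta^K_{x_1},\,[\delta^K_{x_2},\,[\cdots,[\delta^K_{x_p},\delta^K_{x_{p+1}}]\cdots]]\big]\ \in\ \xi'_K(G),
\]
the left-normed bracket of these $p+1$ distributions. The first step is to show $B=0$. The space $\mathbf{C}^\infty_c(G)^\natural$ of $K$-invariant compactly supported smooth functions is a Lie subalgebra of $L^1_K(G)$, hence is itself nilpotent of step at most $p$, so every left-normed bracket of $p+1$ of its elements vanishes. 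Now $\mathbf{C}^\infty_c(G)^\natural$ is dense in $\xi'_K(G)$ (apply the $K$-averaging projection $D\mapsto D^K$ to a smooth approximation of a given $K$-invariant distribution), and convolution is separately continuous on $\xi'(G)$, hence so is the $(p+1)$-fold left-normed bracket; a separately continuous map vanishing on a dense product set vanishes everywhere, so $B=0$. (Concretely one may instead take, for each $j$, the $K$-averages $\varphi^{j}_{\alpha}\in L^1_K(G)$ of an approximate identity supported in a shrinking neighbourhood of $x_j$, so that $\varphi^{j}_{\alpha}\to\delta^{K}_{x_j}$ weak-$*$ with all supports in a fixed compact set, form the corresponding brackets — which are $0$ by nilpotency — and pass to the limit.)

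Next I would record the expansion of a left-normed bracket in an associative algebra: for symbols $A_1,\dots,A_n$,
\[
\operatorname{ad}(A_1)\cdots\operatorname{ad}(A_{n-1})(A_n)=\sum_{m=1}^{n}\ \sum_{\sigma\in\mathcal S_n^{m}}\varepsilon(\sigma)\,A_{\sigma(1)}A_{\sigma(2)}\cdots A_{\sigma(n)},
\]
where $\varepsilon(\sigma)\in\{+1,-1\}$ and $\varepsilon(\mathrm{id})=+1$; recall $\mathcal S_n^{n}=\{\mathrm{id}\}$. This follows by induction on $n$ from $\operatorname{ad}(A_1)(W)=A_1W-WA_1$: writing $W=A_{\tau(1)}\cdots A_{\tau(n-1)}$ for one of the words occurring in $\operatorname{ad}(A_2)\cdots\operatorname{ad}(A_{n-1})(A_n)$, placing $A_1$ in front yields a permutation in $\mathcal S_n^{m,1}$ and placing it at the back yields one in $\mathcal S_n^{m,1^{\sim}}$, and every element of $\mathcal S_n^{m}=\mathcal S_n^{m,1}\sqcup\mathcal S_n^{m,1^{\sim}}$ arises exactly once this way; the all-front word keeps the sign $+1$, giving $\varepsilon(\mathrm{id})=1$. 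Applying the algebra homomorphism $A_j\mapsto\delta^K_{x_j}$, splitting off the unique $m=n=p+1$ term, and using $B=0$ yields the identity of compactly supported (signed) measures
\[
\delta^K_{x_1}\ast\cdots\ast\delta^K_{x_{p+1}}\;=\;-\sum_{m=1}^{p}\ \sum_{\sigma\in\mathcal S_{p+1}^{m}}\varepsilon(\sigma)\,\delta^K_{x_{\sigma(1)}}\ast\cdots\ast\delta^K_{x_{\sigma(p+1)}}.
\]

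To conclude, I would compare supports. For any $y_1,\dots,y_q\in G$ the measure $\delta^K_{y_1}\ast\cdots\ast\delta^K_{y_q}$ is the push-forward of the product Haar measure on $K^q$ under $(k_1,\dots,k_q)\mapsto(k_1.y_1)\cdots(k_q.y_q)$; being a positive measure, its support is exactly the image $(K.y_1)\cdots(K.y_q)$, which is compact — hence closed — and contains $y_1\cdots y_q$ (take all $k_i=e$). Since $\operatorname{supp}\big(\sum_i c_i\mu_i\big)\subseteq\bigcup_i\operatorname{supp}\mu_i$, taking supports in the displayed identity gives
\[
x_1x_2\cdots x_{p+1}\ \in\ (K.x_1)\cdots(K.x_{p+1})\ \subseteq\ \bigcup_{m=1}^{p}\ \bigcup_{\sigma\in\mathcal S_{p+1}^{m}}(K.x_{\sigma(1)})\cdots(K.x_{\sigma(p+1)}).
\]
As the right-hand side is a finite union of closed sets, $x_1x_2\cdots x_{p+1}$ lies in one of them, which is precisely the assertion.

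The step I expect to be the main obstacle is twofold: carefully justifying that the step-$p$ nilpotency of the function algebra $L^1_K(G)$ transfers to the vanishing of the bracket $B$ of the Dirac distributions $\delta^K_{x_j}$ (this rests on density of $\mathbf{C}^\infty_c(G)^\natural$ and continuity of convolution on $\xi'(G)$), and the bookkeeping in the combinatorial expansion — showing that the permutations appearing are exactly $\bigcup_{m}\mathcal S_{p+1}^{m}$, that $\mathcal S_{p+1}^{m,1}$ and $\mathcal S_{p+1}^{m,1^{\sim}}$ partition $\mathcal S_{p+1}^{m}$, and that the ``identity-order'' convolution $\delta^K_{x_1}\ast\cdots\ast\delta^K_{x_{p+1}}$ occurs with a single nonzero coefficient so that it can be isolated. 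Once these are in place, the support comparison is routine.
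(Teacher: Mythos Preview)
Your proof is correct and follows precisely the approach the paper sets up: the paper gives no argument beyond the citation ``See \cite{toure2008lie}'', but the machinery it introduces immediately before the theorem --- the $K$-averaged Dirac measures $\delta^K_x$, the convolution formula $\langle \delta^K_{x_1}\ast\cdots\ast\delta^K_{x_p},f\rangle=\int_{K^p}f((k_1.x_1)\cdots(k_p.x_p))\,dk_1\cdots dk_p$, and the partition $\mathcal{S}_p^{i}=\mathcal{S}_p^{i,1}\cup\mathcal{S}_p^{i,1^{\sim}}$ --- is exactly what your expansion and support comparison use, so this is almost certainly the intended argument.
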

\begin{proof}
    See \cite{toure2008lie}.
\end{proof}

\section{Quasi Spherical function}

The quasi-spherical function is an analog of spherical function (SF), the general theory of SF can be found in \cite{faraut1980analyse}.

\begin{definition}
    The quasi-spherical function $\varphi $ is a continuous function on $G$ with values in the space of endomorphisms of $E$ such that the map
\begin{equation*}
\pi _{\varphi }:L^{1}(G)^{\natural }\longrightarrow End(E),f\longmapsto \pi
_{\varphi }(f)=\int_{G}f(x)\varphi (x^{-1})dx
\end{equation*}
verifies the following relation:
\begin{equation*}
\pi _{\varphi }(f\ast g)=\pi _{\varphi }(f)\pi _{\varphi }(g),~~for~all
~~f\in L^{1}(G)^{\natural }~~and~~g\in C^{p-1}(L^{1}(G)^{\natural }).
\end{equation*}
\end{definition}

\begin{proposition}
Let $\varphi $ be a continuous function on $G$, biinvariant by $K$,
not identically zero.

\begin{enumerate}
\item The function $\varphi $ is quasi-spherical if and only if $x,y\in G$,
\begin{equation*}
\int_{K}\varphi (xky)dk=\varphi (y)\varphi (x)
\end{equation*}%
where $dk$ is a normalized Haar measure of the compact subgroup $K$.

\item The function $\varphi $ is quasi-spherical if and only if

\begin{enumerate}
\item $\varphi (e)= Id_{E}$

\item For all function $g$ of $L^{1}(G)^{\natural }$, there exists an endomorphism $\pi
_{\varphi }(g)$
such that :%
\begin{equation*}
g\ast \varphi =\varphi \pi
_{\varphi } (g)\text{.}
\end{equation*}
\end{enumerate}
\end{enumerate}
\end{proposition}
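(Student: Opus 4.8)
Throughout I work inside $\widetilde{G}=K\ltimes G$, regard $\varphi$ as a $K$-biinvariant $\operatorname{End}(E)$-valued function on $\widetilde{G}$, read every expression $xky$ as a product in $\widetilde{G}$, and take $f\ast g$ to be convolution on $\widetilde{G}$ restricted to $L^{1}(G)^{\natural}\cong L^{1}(\widetilde{G}//K)$. Since $(G,K)$ is quasi-Gelfand of order $p$, one has $C^{p-1}(L^{1}(G)^{\natural})\neq\{0\}$ and $C^{p}(L^{1}(G)^{\natural})=\{0\}$, so $C^{p-1}(L^{1}(G)^{\natural})$ is central in the Lie algebra $L^{1}(G)^{\natural}$, i.e. $f\ast g=g\ast f$ whenever $g\in C^{p-1}(L^{1}(G)^{\natural})$. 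Denote by $\mathrm{(QS)}$, $\mathrm{(FE)}$, $\mathrm{(2a)}$, $\mathrm{(2b)}$ the four assertions of the statement (quasi-sphericity; the identity $\int_{K}\varphi(xky)\,dk=\varphi(y)\varphi(x)$; $\varphi(e)=\operatorname{Id}_{E}$; and $g\ast\varphi=\varphi\,\pi_{\varphi}(g)$ for all $g$). The plan is to prove $\mathrm{(FE)}\Rightarrow\mathrm{(QS)}$, $\mathrm{(FE)}\Rightarrow\mathrm{(2a)}\wedge\mathrm{(2b)}$, $\mathrm{(2a)}\wedge\mathrm{(2b)}\Rightarrow\mathrm{(QS)}$, $\mathrm{(2a)}\wedge\mathrm{(2b)}\Rightarrow\mathrm{(FE)}$, and finally $\mathrm{(QS)}\Rightarrow\mathrm{(FE)}$; together these give both parts. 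Everything except the last implication is routine computation with the conventions above.

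\emph{Consequences of $\mathrm{(FE)}$.} Expanding $\pi_{\varphi}(f)\pi_{\varphi}(g)$ over $\widetilde{G}\times\widetilde{G}$ and rewriting $\varphi(x^{-1})\varphi(y^{-1})=\int_{K}\varphi(y^{-1}kx^{-1})\,dk$, then expanding $\pi_{\varphi}(f\ast g)=\int\!\int f(w)g(y)\varphi(y^{-1}w^{-1})\,dw\,dy$ and inserting a $K$-average (legitimate since $f$, $g$ and $\varphi$ are $K$-biinvariant), the two sides coincide, so $\pi_{\varphi}(f\ast g)=\pi_{\varphi}(f)\pi_{\varphi}(g)$ for \emph{all} $f,g$; in particular $\varphi$ is quasi-spherical. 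Putting $x=y=e$ in $\mathrm{(FE)}$ gives $\varphi(e)^{2}=\varphi(e)$, while $x=e$ and $y=e$ (with $\int_{K}\varphi(ky)\,dk=\varphi(y)$) give $\varphi(e)\varphi(y)=\varphi(y)=\varphi(y)\varphi(e)$; thus $\varphi(e)$ is a projection fixing the range of $\varphi$, and after the harmless normalisation of replacing $E$ by $\varphi(e)E$ we obtain $\mathrm{(2a)}$. The same $K$-averaging applied to $g\ast\varphi$, using $\mathrm{(FE)}$ as $\int_{K}\varphi(v^{-1}kx)\,dk=\varphi(x)\varphi(v^{-1})$, gives $(g\ast\varphi)(x)=\varphi(x)\pi_{\varphi}(g)$, which is $\mathrm{(2b)}$.

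\emph{Consequences of $\mathrm{(2a)}\wedge\mathrm{(2b)}$.} Associativity of convolution gives, for $f,g\in L^{1}(G)^{\natural}$,
\[
(f\ast g)\ast\varphi=f\ast(g\ast\varphi)=f\ast\bigl(\varphi\,\pi_{\varphi}(g)\bigr)=(f\ast\varphi)\,\pi_{\varphi}(g)=\varphi\,\pi_{\varphi}(f)\,\pi_{\varphi}(g),
\]
whereas by $\mathrm{(2b)}$ the left side is $\varphi\,\pi_{\varphi}(f\ast g)$; evaluating at $e$ and using $\mathrm{(2a)}$ gives $\pi_{\varphi}(f\ast g)=\pi_{\varphi}(f)\pi_{\varphi}(g)$ for all $f,g$, hence $\mathrm{(QS)}$. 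Feeding $K$-averages of approximate identities $g\to\delta_{x}^{K}$ into $\mathrm{(2b)}$, with $\pi_{\varphi}(\delta_{x}^{K})=\varphi(x^{-1})$ and $(\delta_{x}^{K}\ast\varphi)(z)=\int_{K}\varphi(x^{-1}kz)\,dk$, recovers $\mathrm{(FE)}$.

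\emph{From $\mathrm{(QS)}$ to $\mathrm{(FE)}$.} Symmetrising both double integrals in $\pi_{\varphi}(f\ast g)=\pi_{\varphi}(f)\pi_{\varphi}(g)$ by the $K$-biinvariance of $f$ and $g$, the definition of quasi-sphericity becomes
\[
\int_{\widetilde{G}}\!\int_{\widetilde{G}} f(w)\,g(u)\Bigl[\int_{K}\varphi(u^{-1}kw^{-1})\,dk-\varphi(w^{-1})\varphi(u^{-1})\Bigr]\,du\,dw=0
\]
for every $f\in L^{1}(G)^{\natural}$ and every $g\in C^{p-1}(L^{1}(G)^{\natural})$. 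As $f$ ranges over all of $L^{1}(G)^{\natural}$ and the bracketed kernel is continuous and $K$-biinvariant in $w$, testing against $K$-averages of approximate identities concentrated near arbitrary points yields, for all $w$ and all $g\in C^{p-1}(L^{1}(G)^{\natural})$, $(g\ast\varphi)(w^{-1})=\varphi(w^{-1})\pi_{\varphi}(g)$, i.e. $g\ast\varphi=\varphi\,\pi_{\varphi}(g)$ for $g\in C^{p-1}(L^{1}(G)^{\natural})$. The step I expect to be the main obstacle — because $C^{p-1}(L^{1}(G)^{\natural})$ need not be dense in $L^{1}(G)^{\natural}$ — is to upgrade this eigen-relation to all of $L^{1}(G)^{\natural}$. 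I would fix $g_{0}\in C^{p-1}(L^{1}(G)^{\natural})$ with $\pi_{\varphi}(g_{0})\neq 0$ and use centrality of $C^{p-1}(L^{1}(G)^{\natural})$ to obtain, for every $h\in L^{1}(G)^{\natural}$,
\[
g_{0}\ast(h\ast\varphi)=(h\ast g_{0})\ast\varphi=h\ast(g_{0}\ast\varphi)=(h\ast\varphi)\,\pi_{\varphi}(g_{0});
\]
thus $h\ast\varphi$ lies in the kernel of $\psi\mapsto g_{0}\ast\psi-\psi\,\pi_{\varphi}(g_{0})$, and a Schur-type minimality of that kernel — the $\operatorname{End}(E)$-valued quasi-Gelfand counterpart of the one-dimensionality of the joint eigenspace of a classical spherical function — should force $h\ast\varphi=\varphi\,\pi_{\varphi}(h)$ for all $h\in L^{1}(G)^{\natural}$, giving $\mathrm{(2b)}$; evaluating at $e$ then gives $\varphi(e)\pi_{\varphi}(h)=\pi_{\varphi}(h)$ and, after the normalisation above, $\mathrm{(2a)}$. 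By the previous paragraph $\mathrm{(FE)}$ follows, which closes the cycle and proves the proposition.
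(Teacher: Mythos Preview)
Your routine implications --- $\mathrm{(FE)}\Rightarrow\mathrm{(QS)}$, $\mathrm{(FE)}\Rightarrow\mathrm{(2a)}\wedge\mathrm{(2b)}$, and $\mathrm{(2a)}\wedge\mathrm{(2b)}\Rightarrow\mathrm{(QS)}$ via associativity --- are exactly the computations the paper carries out, with the same $K$-averaging trick to insert $\int_{K}\,dk$ into the convolution integrals. So on those parts your sketch matches the paper.

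The interesting point is the implication $\mathrm{(QS)}\Rightarrow\mathrm{(FE)}$. The paper does precisely the double-integral computation you wrote down: it expresses
\[
\pi_{\varphi}(f\ast g)-\pi_{\varphi}(f)\pi_{\varphi}(g)=\int_{G}\!\int_{G} f(y)\,g(x)\Bigl[\int_{K}\varphi(x^{-1}k^{-1}y^{-1})\,dk-\varphi(y^{-1})\varphi(x^{-1})\Bigr]\,dx\,dy
\]
for $f\in L^{1}(G)^{\natural}$ and $g\in C^{p-1}(L^{1}(G)^{\natural})$, and from its vanishing simply \emph{asserts} that the bracketed kernel is zero (``locally almost everywhere''). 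It does not address the obstacle you singled out: $g$ ranges only over $C^{p-1}(L^{1}(G)^{\natural})$, not over a dense set of test functions, so the passage from the integral identity to the pointwise functional equation is not justified in the paper either. There is no density argument, no centrality trick, and no Schur-type step in the paper's proof.

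In short, your proof is strictly more careful than the paper's: you correctly isolate a genuine gap and propose a mechanism (centrality of $C^{p-1}$ plus a minimality argument on the eigenspace $\{\psi:g_{0}\ast\psi=\psi\,\pi_{\varphi}(g_{0})\}$) to close it. That mechanism is not completed in your sketch --- the ``Schur-type minimality'' you appeal to would itself need proof in this $\operatorname{End}(E)$-valued setting --- but the paper makes no attempt at all, so your version already goes further than the published argument.
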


\begin{proof}

\begin{enumerate}

For all $g\in C^{p-1}(L^{1}(G)^{\natural })$ and $f\ast
g=\int_{G}f(y)g(y^{-1}x)dy$, we have 
\begin{eqnarray*}
\pi
_{\varphi } (f\ast g)
&=&\int_{G}f(y)\int_{G}g(x)\varphi (x^{-1}y^{-1})dxdy \\
&=&\int_{G}f(y)\int_{G}ad^{p-1}h(k)(x)\varphi (x^{-1}y^{-1})dxdy
\end{eqnarray*}%
Via $x\longrightarrow kx$, then%
\begin{eqnarray*}
\pi
_{\varphi } (f\ast g)
&=&\int_{G}\int_{G}f(y)ad^{p-1}h(k)(x)(\int_{K}\varphi
(x^{-1}k^{-1}y^{-1})dk)dxdy
\end{eqnarray*}%
and
\begin{eqnarray*}
\pi
_{\varphi } (f\ast g)-\pi
_{\varphi } (f)\pi
_{\varphi } (g)\pi
_{\varphi } (g)
&=&\int_{G}\int_{G}f(y)ad^{p-1}h(k)(x)(\int_{K}\varphi
(x^{-1}k^{-1}y^{-1})dk)dxdy \\
&-&(\int_{G}f(y)\varphi (y^{-1})dy)(\int_{G}ad^{p-1}h(k)(x)\varphi
(x^{-1})dx) \\
&=&\int_{G}\int_{G}f(y)ad^{p-1}h(k)(x)(\int_{K}\varphi
(x^{-1}k^{-1}y^{-1})dk-\varphi (y^{-1})\varphi (x^{-1}))dxdy
\end{eqnarray*}%
If $\varphi $ is a quasi-spherical function then 
\begin{equation*}
\pi
_{\varphi } (f\ast g)-\pi
_{\varphi } (f)\pi
_{\varphi } (g)=0\text{.}
\end{equation*}%
Thus,%
\begin{equation*}
\int_{K}\varphi (x^{-1}k^{-1}y^{-1})dk-\varphi (y^{-1})\varphi (x^{-1})=0.
\end{equation*}%
This implies that 
\begin{equation*}
\int_{K}\varphi (x^{-1}k^{-1}y^{-1})dk=\varphi (y^{-1})\varphi (x^{-1})~%
\text{~locally~~almost~~everywhere.}
\end{equation*}%
So $\int_{G}\varphi (xky)dk=\varphi (y)\varphi (x)$ locally almost everywhere.

Conversely, if%
\begin{equation*}
\int_{K}\varphi (xky)dk=\varphi (y)\varphi (x)\text{,}
\end{equation*}%
then $\pi
_{\varphi } (f\ast g)=\pi
_{\varphi } (f)\pi
_{\varphi } (g)$ for all $f\in L^{1}(G)^{\natural
},~g\in C^{p-1}(L^{1}(G)^{\natural })$ and $\varphi $ a quasi-spherical function.

\item Suppose $\varphi $ is a quasi-spherical function on $G$.

\begin{enumerate}
\item $\varphi (e)=Id_{E}$, indeed

\begin{equation*}
\varphi (x)=\varphi (ex)=\int_{K}\varphi (ekx)dk=\varphi (x)\varphi
(e)
\end{equation*}

 for all $x\in G$, therefore $\varphi (e)=Id_{E}.$

\item 
\begin{eqnarray*}
g\ast \varphi (x)
&=&\int_{G}g(y)(\int_{K}\varphi (y^{-1}kx)dk)dy \\
&=&\varphi (x)\int_{G}g(y)\varphi (y^{-1})dy \\
&=&\varphi (x)\pi
_{\varphi } (g)~~\forall x\in G,
\end{eqnarray*}

hence $g\ast \varphi =\varphi \pi
_{\varphi } (g).$

Conversely, for all $f\in L^{1}(G)^{\natural }$ and $g\in C^{p-1}(L^{1}(G)^{\natural })$

\begin{eqnarray*}
\pi
_{\varphi } (f\ast g) 
&=&\int_{G}\int_{G}f(y)g(x)\varphi (x^{-1}y^{-1})dxdy \\
&=&\int_{G}f(y)g\ast \varphi (y^{-1})dy\\
&=&\int_{G}f(y)\varphi (y^{-1})\pi (g)dy\\
&=&\pi
_{\varphi } (f)\pi
_{\varphi } (g)~~\forall f\in
L^{1}(G)^{\natural }~et~g\in C^{p-1}(L^{1}(G)^{\natural }).
\end{eqnarray*}%
Therefore $\varphi $ is a quasi-spherical function.
\end{enumerate}
\end{enumerate}
\end{proof}

\begin{theorem}
Let $\varphi$ be a bounded quasi-spherical function, the application 
\begin{equation*}
f \longmapsto \pi
_{\varphi }(f) = \int_{G} f(x) \varphi(x^{-1}) dx
\end{equation*}
is the representation of  convolution algebra $%
L^{1}(G)^{\natural}$, and all representation not identically zero of $%
L^{1}(G)^{\natural}$ is of this form.
\end{theorem}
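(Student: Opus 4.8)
The plan is to prove the two implications separately. For the forward direction, suppose $\varphi$ is a bounded quasi-spherical function. We must show $\pi_\varphi$ is multiplicative on all of $L^1(G)^\natural$, i.e. $\pi_\varphi(f_1 \ast f_2) = \pi_\varphi(f_1)\pi_\varphi(f_2)$ for arbitrary $f_1, f_2 \in L^1(G)^\natural$, not merely when the second factor lies in $C^{p-1}(L^1(G)^\natural)$. The key tool is Proposition (part 2b): for every $g \in L^1(G)^\natural$ we have $g \ast \varphi = \varphi\,\pi_\varphi(g)$ as a statement valid for all $g$, since the proof of (2b) only used the functional equation $\int_K \varphi(xky)\,dk = \varphi(y)\varphi(x)$ and biinvariance, never the restriction $g \in C^{p-1}$. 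Granting that, I would compute, as in the last displayed calculation of the Proposition's proof,
\begin{equation*}
\pi_\varphi(f_1 \ast f_2) = \int_G \int_G f_1(y) f_2(x) \varphi(x^{-1}y^{-1})\,dx\,dy = \int_G f_1(y)\,(f_2 \ast \varphi)(y^{-1})\,dy = \int_G f_1(y)\,\varphi(y^{-1})\pi_\varphi(f_2)\,dy = \pi_\varphi(f_1)\pi_\varphi(f_2),
\end{equation*}
where boundedness of $\varphi$ guarantees all integrals converge absolutely (so Fubini applies) and that $\pi_\varphi(f)$ is a well-defined endomorphism of the finite-dimensional space $E$. This gives that $\pi_\varphi$ is an algebra homomorphism $L^1(G)^\natural \to \mathrm{End}(E)$, and it is not identically zero because $\varphi(e) = \mathrm{Id}_E \neq 0$ together with an approximate-identity argument shows $\pi_\varphi(f) \to \mathrm{Id}_E$ along a suitable net.

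For the converse, let $\pi : L^1(G)^\natural \to \mathrm{End}(E)$ be a representation that is not identically zero. I would reconstruct $\varphi$ from $\pi$. The standard move (mirroring the classical spherical-function theory in \cite{faraut1980analyse}) is to extend $\pi$ to $K$-invariant compactly supported distributions, in particular to the measures $\delta_x^K$ introduced above, and set $\varphi(x) := \pi(\delta_{x^{-1}}^K)$, or equivalently to exhibit $\varphi$ as the reproducing kernel: one picks a suitable vector/functional and writes $\varphi(x)$ via $\langle \pi(\cdot), \cdot\rangle$ against a net approximating $\delta_x^K$. Then one checks continuity of $x \mapsto \varphi(x)$, the normalization $\varphi(e) = \mathrm{Id}_E$ (using that $\pi$ respects the approximate identity and is nonzero, hence nondegenerate on $E$ after passing to the essential subspace — here finite-dimensionality of $E$ is convenient), and finally the functional equation $\int_K \varphi(xky)\,dk = \varphi(y)\varphi(x)$, which is extracted from $\pi(\delta_x^K \ast \delta_y^K) = \pi(\delta_x^K)\pi(\delta_y^K)$ by unwinding the convolution formula $\langle \delta_x^K \ast \delta_y^K, f\rangle = \int_K f((k_1.x)(k_2.y))\,dk_1\,dk_2$ given earlier in the paper. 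By part 1 of the Proposition this functional equation is exactly what makes $\varphi$ quasi-spherical, and boundedness of $\varphi$ follows because $\pi$ is a $\ast$-representation on a finite-dimensional space, so $\|\varphi(x)\| = \|\pi(\delta_{x^{-1}}^K)\|$ stays bounded (the operators $\delta_x^K$ have uniformly bounded norm as measures).

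The main obstacle I anticipate is the converse direction, specifically making rigorous the passage from the abstract representation $\pi$ of the algebra $L^1(G)^\natural$ (which has no identity element) to an honest $\mathrm{End}(E)$-valued function $\varphi$ on $G$: one must handle the approximate identity carefully, verify that the essential subspace $\pi(L^1(G)^\natural)E$ is all of $E$ (or replace $E$ by it), and establish joint continuity of $\varphi$. In the classical $p=1$ case this is handled by the structure theory in \cite{faraut1980analyse}; here the only new subtlety is that we are in the quasi-Gelfand setting, but since the defining multiplicativity of $\pi_\varphi$ is only required with one factor in $C^{p-1}(L^1(G)^\natural)$ for the \emph{definition} of quasi-spherical, whereas a genuine representation is multiplicative everywhere, the reconstruction actually produces a quasi-spherical function a fortiori — so no extra difficulty beyond the classical argument arises, and one can cite \cite{faraut1980analyse} for the topological details while supplying the algebraic identities above.
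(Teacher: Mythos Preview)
Your forward direction is correct and in fact more complete than the paper's proof, which does not explicitly verify full multiplicativity of $\pi_\varphi$ on $L^1(G)^\natural$ (as opposed to the partial multiplicativity with second factor in $C^{p-1}$ that defines quasi-sphericity). Your observation that Proposition~3.1(2b) already holds for \emph{all} $g\in L^1(G)^\natural$ --- because its proof only uses the functional equation $\int_K\varphi(xky)\,dk=\varphi(y)\varphi(x)$ --- is the right point, and your Fubini computation is exactly the one appearing at the end of the proof of that Proposition.

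For the converse your route is genuinely different from the paper's. You propose to extend the representation $\pi$ from $L^1(G)^\natural$ to the algebra $\xi'_K(G)$ of $K$-invariant compactly supported distributions and then set $\varphi(x)=\pi(\delta_{x^{-1}}^K)$, reading off the functional equation from $\pi(\delta_x^K\ast\delta_y^K)=\pi(\delta_x^K)\pi(\delta_y^K)$. The paper instead stays entirely inside $L^1$: it picks an auxiliary function $f_0\in L^1(G)^\natural$ on which $\pi$ is nondegenerate, defines $\varphi$ as a convolution $\varphi(y^{-1})=\int_G f_0(y^{-1}x)\varphi_0(x^{-1})\,d\mu(x)$, and verifies directly that $\varphi(e)=\mathrm{Id}_E$ and that $g\ast\varphi=\varphi\,\pi(g)$, whence quasi-sphericity follows from Proposition~3.1(2). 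Your approach is conceptually cleaner and makes the functional equation immediate, but it requires justifying the extension of an a~priori purely algebraic representation of $L^1(G)^\natural$ to distributions (equivalently, some continuity), which you flag as the main obstacle. The paper's construction avoids that extension altogether by working with convolutions against a fixed $L^1$ function, at the cost of an argument that is harder to follow; both approaches ultimately rely on the same nondegeneracy/approximate-identity input, and either one suffices.
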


\begin{proof}

Let $f \in L^{1}(G)^{\natural}$ and $\mu$ a Haar measure on $G$. Consider the function $\varphi_{f}$ defining by 
\begin{equation*}
\varphi_{f}(g) = \int_{G} f(x) g(x) d\mu(x) ~~for~ all ~g \in
~C^{p-1}(L^{1}(G)^{\natural}).
\end{equation*}

Since $\varphi _{f}$ is continuous, there exists a function $%
f_{0}\in L^{1}(G)$ such that%
\begin{equation*}
\varphi _{f}(g)=\int_{G}gf_{0}d\mu (x).
\end{equation*}

Let $f_{0}\in L^{1}(G)^{\natural }$. For all function $g\in
C^{p-1}(L^{1}(G)^{\natural })$ we have :%
\begin{eqnarray*}
\pi
_{\varphi } (g)=\pi
_{\varphi } (g\ast f_{0}) &=&(\int_{G}\int_{G}g(y)f(y^{-1}x)\varphi
_{0}(x^{-1})d\mu (x)d\mu (y)) \\
&=&\int_{G}g(y)\varphi (y^{-1})d\mu (y)
\end{eqnarray*}%
where one put $\varphi (y^{-1})=\int_{G}f_{0}(y^{-1}x)\varphi
_{0}(x^{-1})d\mu (x)$. The function is $K$-biinvariant and%
\begin{equation*}
\varphi (e)=\int_{G}f_{0}(ex)\varphi _{0}(x^{-1})d\mu (x)\pi
(f_{0})^{-1}=\int_{G}f(x)\varphi (x^{-1})dx=f\ast \varphi (e)=\varphi (e)\pi
_{\varphi }
(f)
\end{equation*}%
which implies $\varphi (e)=Id_{E}$ for all functions $f$,%
\begin{eqnarray*}
\pi
_{\varphi } (f\ast g)
&=&\int_{G}\int_{G}f(y)g(x)\varphi (x^{-1}y^{-1})d\mu (x)d\mu (y) \\
&=&\int_{G}f(y)g\ast \varphi (y^{-1})d\mu (y)
\end{eqnarray*}%
\begin{equation*}
\pi
_{\varphi } (f)\pi
_{\varphi } (g)=(\int_{G}f(y)\varphi (y^{-1})d\mu (y))\pi
_{\varphi } (g)
\end{equation*}

which implies 
\begin{eqnarray*}
\pi
_{\varphi }(f*g) -\pi
_{\varphi }(f)\pi
_{\varphi }(g)= \int_{G} f(y) (g*\varphi(y^{-1}) -
\varphi(y^{-1})\pi
_{\varphi }(g)) d\mu(y) = 0
\end{eqnarray*}
this is true for all functions $f \in L^{1}(G)^{\natural}$ and $g
\in C^{p-1}(L^{1}(G)^{\natural})$, hence 
\begin{equation*}
g*\varphi(y) = \varphi(y)\pi
_{\varphi }(g)
\end{equation*}
almost everywhere, then $\varphi$ is almost everywhere equal to quasi-spherical function.
\end{proof}

\paragraph{Comment on the Gelfand transform associted:} 
For all $f\in L^{1}(G)^{\natural }$, we call general Gelfand transform of $f$, the map $g_{f}$ of $X^{n}(L^{1}(G)^{\natural
})$ in $M(n,\mathbb{C})$ defining by, 
\begin{eqnarray*}
g_{f}:X^{n}(L^{1}(G)^{\natural }) &\longrightarrow &M(n,\mathbb{C}) \\
\pi &\longmapsto & \pi (f),
\end{eqnarray*}%
where $X^{n}(L^{1}(G)^{\natural })$ is the representation space of convolution algebra $L ^{1}(G)^{\natural }$ of dimension $n$, the map $f\longrightarrow g_{f}$ of $L^{1}(G)^{\natural }$ in $M(n,%
\mathbb{C})$ is called \textbf{Gelfand transform} associated to $L^{1}(G)^{\natural }$  \cite{AMBP_1996__3_2_117_0}.

Let $(G , K)$ be a quasi-Gelfand pair, $S(G/K)$ the quasi-spherical bounded function space on $G$ with values in $M(n , \mathbb{C}%
)$ and $\mu$ a Haar measure on $G$. If $\varphi \in S(G/K) $ then
the map $f \longmapsto \pi_{\varphi}(f)$ is a representation of convolution algebra $L^{1}(G)^{\natural}$ and all representations of $L ^{1}(G)^{\natural}$ is of this form. Thus, the
map $\varphi \longrightarrow \pi_{\varphi}$ is bijective and allows to identify
 the space $S(G/K)$ to the representations space
with dimension $n$ of the convolution algebra $L^{1}(G)^{\natural}$. 
\newline
Therefore, there exists the map $\mathcal{F}_{f}$ of $S(G/K)$
in $M(n , \mathbb{C})$ such that: 
\begin{equation*}
\mathcal{F}f(\varphi) = \int_{G} f(x) \varphi(x^{-1}) d\mu(x).
\end{equation*}

Let $f\in L^{1}(G)^{\natural }$. We call the quasi-spherical Fourier transform of the function $f$, the map $\mathcal{F}f$ defining on $S(G/K)$ by
\begin{equation*}
\mathcal{F}f(\varphi )=\int_{G}f(x)\varphi (x^{-1})d\mu (x).
\end{equation*}

We call the quasi-spherical Fourier cotransform of the function $f
$, the map $\overline{\mathcal{F}}f$ define by : 
\begin{equation*}
(\overline{\mathcal{F}}f)(\varphi )=\int_{G}f(x)\varphi (x)d\mu (x).
\end{equation*}

\section{Examples}

\begin{enumerate}
    \item 

  Let $H_{1}$ be the 3-dimensional Heisenberg group whose the coordinates are $(x, y, t) \in \mathbb{R}^{3}$ and consider $N =\mathbb{C}\times H_{1}, K=U(1)$ and K acts on N by 

    \begin{align*}
        k.(w_{1}, w_{2}, t) = (kw_{1}, kw_{2}, t)~~\forall w_{1}, w_{2} \in \mathbb{C}, t \in \mathbb{R}.
    \end{align*}

    $\eta^{*}$ has a base $\{ \alpha_{1}, \alpha_{2}, \beta_{1}, \beta, \lambda \}$ where 

 $\alpha_{1}(w_{1}, w_{2}, t) = Re(w_{1}), ~\alpha_{2}(w_{1}),w_{2}, t) = Im(w_{1}), ~\beta_{1}(w_{1}, w_{2}, t) = Re(w_{2}), ~\beta_{2}(w_{1}, w_{2}, t)$ and $\lambda(w_{1}, w_{2}, t)=t. $ 

Let $l=\alpha_{1} + \lambda$. Then 

$$\mathcal{O}_{l}^{N}= l + span(\beta_{1}, \beta_{2}).$$
 Since 
 $$k.l(w_{1}, w_{2}, t) = Re(w_{1/k}) + t,$$
 it is clear that $k.l ~\rm{in} ~\mathcal{O}_{l}^{N}$ if and only if $k=1$. Hence, $K_{l}=\{1\}$ and its Lie algebra $=\{0\}$. The following scalar product 

 $$\langle (x+iy, u+iv, t), (x^{\prime}+iy^{\prime}, u^{\prime}+iv^{\prime}, t) \rangle = xx^{\prime}+yy^{\prime} + uu^{\prime} + vv^{\prime} + tt^{\prime}$$
allow us to obtain the below decomposition

\begin{align*}
    z = \mathbb{C} \times \{0\}\times \mathbb{R},~~z_{l} = span((i,0,1),(1,0,-1)),~~z_{l}^{\prime} = span((1,0,1)), ~~a_{l}=\{0\}~~b_{l}= m = span((0,1,0), (0,i,0))
\end{align*}

$H_{l}$ is the 3-dimension Heisenberg group of Lie algebra

$$h_{l}= b_{l}\oplus z^{\prime}_{l} = \{ (t,w,t): ~~t \in \mathbb{R}, w \in \mathbb{C}\}.$$

The bracket on $h_{l}$ is defined by 

\begin{align*}
    [(t, w, t), (t^{\prime}, w^{\prime}, t^{\prime})]_{l}= (-\frac{1}{2}Im(w\Bar{w^{\prime}}), 0, -\frac{1}{2}Im(w\Bar{w^{\prime}}))
\end{align*}
and the map 

\begin{eqnarray*}
    h_{l}
    &\longrightarrow & h_{1}\\
    (t, w, t)
    & \longmapsto & (\frac{w}{\sqrt{2}}, t)
\end{eqnarray*}

is a Lie algebra isomorphism. Hence $L_{K_{l}}^{1}(H_{l}) = L_{\{1\}}^{1} \cong L^{1}(H_{1})$ is not commutative. It follows from [Lemma 2.4 \cite{benson1999orbit}] that $(\mathbb{C}\times H_{1}, U(1))$ is not a Gelfand pair, and since $L_{U(1)}^{1}(\mathbb{C}\times H_{1})$ is nilpotent thus  $(\mathbb{C}\times H_{1}, U(1))$ is a nontrivial quasi-Gelfand pair.

\item  Let now consider the $H_{n}$ the Heisenberg group of dimension 2n+1. The pair $({H}_{n}, SO(n))$ is a trivial quasi-Gelfand pair.

Indeed, the action of $K = SO(n)\times T$ on $\mathbb{C}^{n}$ for $n \geq 3$,

$$(A,c)z = cAz$$ for $A \in SO(n,\mathbb{R}), c \in T, z \in \mathbb{C}^{n}$.

The decomposition of $\mathbb{C}[\mathbb{C}^{n}]$ under the action of $K$ is determined by the classical theory of spherical harmonics. The polynomial

$$\varepsilon(z) = z_{1}^{2}, \cdots ,z_{n}^{2} $$

is invariant under the action of $SO(n,\mathbb{R})$ on $\mathbb{C}[\mathbb{C}^{n}]$, i.e. let $k= (A,c) \in K$, then

\begin{eqnarray*}
(A,c)\varepsilon(z) 
&=& cA\varepsilon(z)\\
&=& c(Az_{1}^{2} + \cdots + A z_{n}^{2}).
\end{eqnarray*}

Hence $\varepsilon(z) \in \mathbb{C}[\mathbb{C}^{n}]$, thus $\varepsilon(z)$ is invariant under $K$. Consider

$$\Delta = \varepsilon(\partial_{z}) =\left(\frac{\partial}{\partial_{z_{1}}}\right) ^{2} +  \cdots + \left(\frac{\partial}{\partial_{z_{n}}}\right) ^{2}$$

and we define the space of harmonic polynomials by

$\mathcal{H} = \ker(\Delta: \mathbb{C}[\mathbb{C}^{n}] \rightarrow \mathbb{C}[\mathbb{C}^{n}])$. Thereby, 

$$\mathcal{H}_{m} = \mathbb{C}[\mathbb{C}^{n}]_{m} \cap \mathcal{H}$$

is the space of homogeneous harmonic polynomials of degree $m$ which are $SO(n,\mathbb{R})$-invariant by the invariance of $\Delta$.

Indeed, $\mathcal{H}_{m}$ is $SO(n,\mathbb{R})$-irreducible and

$$\mathbb{C}[\mathbb{C}^{n}]_{m} =  \mathcal{H}_{m} \oplus \varepsilon \mathbb{C}[\mathbb{C}^{n}]_{m-2}. $$

For nonzero integers $k, l,$

$$\mathbb{C}[\mathbb{C}^{n}]_{k,l} = m_{\varepsilon}^{l}\mathcal{H}_{k} = \varepsilon^{l}\mathcal{H}_{k}$$ 

is an irreducible $K$-invariant subspace of $\mathbb{C}[\mathbb{C}^{n}]_{k+2l}$.

$\mathbb{C}[\mathbb{C}^{n}]_{k,l}$ has dimension $h_{k}= \dim(\mathcal{H}_{k})$ independently of $l$.
Explicitly for $h_{0} = 1, h_{n}= n$ and

$$h_{k}= \dim(\mathbb{C}[\mathbb{C}^{n}]_{k}) - \dim(\mathbb{C}[\mathbb{C}^{n}]_{k-2}) = \begin{pmatrix} 
k+n-1 \\
k 
\end{pmatrix} \quad + \quad \begin{pmatrix} 
k+n-3  \\
k-2 
\end{pmatrix}$$ 

for $k \geq 2$.

The action of $K$ is multiplicity free because the elements $\mathbb{C}[\mathbb{C}^{n}]_{k,l}$ are pairwise equivalent.
 Indeed, $\mathbb{C}[\mathbb{C}^{n}]_{k,l}$ and $\mathbb{C}[\mathbb{C}^{n}]_{k^{ '},l^{'}}$ are of equal dimension when $k=k^{'}$ and the circle $T$ acts on this space by the elements $c \rightarrow c^{-( 2k+l)}$ and $c \rightarrow c^{-(2k^{'}+l^{'})}$ respectively. So we have following decomposition for this example
 \begin{equation*}
\mathbb{C}[\mathbb{C}^{n}] = \sum_{k,l} \mathbb{C}[\mathbb{C}^{n}]_{(k,l) \in \Lambda}.
\end{equation*}
Consequently, we deduce that the pair $({H}_{n}, SO(n,\mathbb{R}))$ is not a Gelfand pair because $\mathbb{ C}[\mathbb{C}^{n}]_{k,l}$ and $\mathbb{C}[\mathbb{C}^{n}]_{k^{'},l^{' }}$ are equivalent when $k=k^{'}$.

\end{enumerate}

\paragraph{Comment:} It turns out that when you consider the free two-step nilpotent Lie groups $\mathcal{N}_{v,2}$ and the special orthogonal group $SO(v)$, the pair $(\mathcal{N}_{v,2}, SO(v))$ is a Gelfand pair (see \cite{benson1990gel}, \cite{fischer2010bounded} for details).

\section{Acknowledgements}
I am grateful to professor Kinvi Kangni for introduce me to this topic.
\bibliographystyle{alpha}
\bibliography{Article}
\end{document}